\documentclass[12pt,reqno]{amsart}
\usepackage[a4paper,margin=2.5cm,top=2.5cm,bottom=2.5cm,centering,headheight=3ex,headsep=4ex,vcentering]{geometry}

\usepackage{amssymb,amsfonts,amsmath,amsthm}
\usepackage{orcidlink}

\usepackage[cal=cm,scr=euler]{mathalfa}
\usepackage{microtype}
\usepackage{mlmodern}

\numberwithin{equation}{section}

\theoremstyle{plain}
\newtheorem{theorem}{Theorem}
\newtheorem{lemma}{Lemma}

\theoremstyle{definition}

\theoremstyle{remark}
\newtheorem{remark}{Remark}

\newcommand{\NN}{\mathbb{N}}
\newcommand{\ZZ}{\mathbb{Z}}
\newcommand{\poch}{(q;q)_\infty}

\newcommand{\vtwo}{\nu_2}

\title[On the positivity of truncated pentagonal number series]{On the positivity of truncated pentagonal number series and some conjectures of Merca}

\author[M. P. Saikia]{Manjil P. Saikia\,\orcidlink{0000-0002-2997-6731}}
\address[M. P. Saikia]{Mathematical and Physical Sciences division, School of Arts \& Sciences, Ahmedabad University, Navrangpura, Ahmedabad 380009, Gujarat, India}
\email{manjil.saikia@ahduni.edu.in}

\author[A. Sarma]{Abhishek Sarma\,\orcidlink{0009-0005-0075-8000}}
\address[A. Sarma]{Department of Basic Sciences and humanities, Assam Skill University,  Mangaldai 784125, Assam, India}
\email{abhitezu002@gmail.com}

\keywords{Truncated theta series, pentagonal number theorem, partitions,
$2$-adic valuation, positivity, Merca's conjectures}
\subjclass[2020]{Primary 11P81; Secondary 05A17, 05A20, 05A30, 11P84}

\begin{document}

\begin{abstract}
Let $\nu_2(m)$ denote the $2$-adic valuation of a positive integer $m$ and
set $N_m=m\bigl(1+\nu_2(m)/2\bigr)$. We prove four conjectures of Merca on
the nonnegativity of truncated pentagonal number series weighted by the
infinite products $\prod_{m\ge 1}(1-q^{2N_m})$ and
$\prod_{r\ge 1}\bigl(q^{2^r r};q^{2^{r+1}r}\bigr)_\infty$. Our method is to regard the exponent map $m\mapsto m(\nu_2(m)+2)$
as a dynamical system on the positive integers: the factors of the
associated quotient link into chains along its forward orbits, and the three
orbits seeded at $1$, $4$ and $5$ are pairwise disjoint and telescope to
exactly $1/\bigl((1-q)(1-q^4)(1-q^5)\bigr)$. The exponent triple $(1,4,5)$
is admissible in the sense of earlier work by Liu, which yields the desired factorization
into two series with nonnegative coefficients. This orbit telescoping technique
appears to be a mechanism complementary to the P\'olya--Szeg\H o criterion
that underlies most existing positivity results of this kind in the literature.
\end{abstract}

\maketitle

\section{Introduction}

Euler's pentagonal number theorem,
\begin{equation}\label{eq:epnt}
      \poch:=\prod_{m\ge 1}(1-q^m)=\sum_{r\in\ZZ}(-1)^r q^{r(3r-1)/2},
\end{equation}
is among the oldest and most influential identities in the theory of
partitions. Here and elsewhere we use the Euler product notation
\[
(a;q)_\infty:=\prod_{i\geq 0}(1-aq^i),
\]
for a formal variable $q$. In 2012, Andrews and Merca \cite{AndrewsMerca2012} initiated the
study of its \emph{truncations}, proving that the series obtained by cutting
the pentagonal sum off after finitely many terms, suitably normalized, has
nonnegative coefficients. This discovery opened up what is by now one of the
most active themes in partition theory: establishing the nonnegativity of
truncated theta series and of the closely related \emph{tails} of such series.

We mention some highlights of the last few years. Zhou \cite{Zhou2024} refined
the Andrews--Merca result and settled several conjectures of Merca
\cite{Merca2021a,Merca2021b,Merca2021c} and of Krattenthaler--Merca--Radu
\cite{KrattMercaRadu} on truncated pentagonal number series; Yao
\cite{Yao2024} proved a conjecture of Ballantine and Merca
\cite{BallantineMerca2023} on truncated sums of $6$-regular partitions; and
Chen and Yao \cite{ChenYao2025} confirmed a family of Merca's conjectures
\cite{Merca2021c} on truncated series built from the Rogers--Ramanujan
functions, showing in fact that fewer denominator factors suffice for
nonnegativity than originally conjectured. Recently, the authors \cite{SaikiaSarma} proved a conjecture of Merca \cite{Merca2026} in a
way slightly different from Zhou's treatment \cite{Zhou2024} of the same case.
Most relevant to the present paper, Liu \cite{Liu2024} established a general
positivity theorem for tails of pentagonal-type series divided by products
$(1-q^a)(1-q^b)(1-q^c)$ for an explicit list of admissible triples $(a,b,c)$,
and used it to resolve two conjectures of Merca
\cite[Conjectures 13 and 15]{Merca2025}. A common thread running through most
of these proofs is a classical criterion of P\'olya and Szeg\H o
\cite{PolyaSzego} on partitions into a finite set of parts, or some variant of
it.

In this paper we first prove a conjecture of Merca of a rather different character (Theorem \ref{thm:main} below),
in which the tail of the pentagonal number series is weighted by an infinite
product whose exponents are governed by $2$-adic valuations. To state it, let
$\vtwo(m)=\max\{a\ge 0:2^a\mid m\}$ denote the $2$-adic valuation of a
positive integer $m$, and following Merca set
\[
  N_m=m\Bigl(1+\tfrac{\vtwo(m)}{2}\Bigr),
  \qquad\text{so that}\qquad
  e(m):=2N_m=m\bigl(\vtwo(m)+2\bigr)\in\ZZ_{\ge 1}.
\]
Merca's function $p_\nu(n)$ is defined by the generating product
\begin{equation}\label{eq:pnu-def}
  \sum_{n\ge 0}p_\nu(n)\,q^n
  =\prod_{m\ge 1}\bigl(1+q^m+q^{2m}+\cdots+q^{\vtwo(2m)\,m}\bigr),
\end{equation}
so that $p_\nu(n)$ counts the partitions of $n$ in which each part $m$ appears
at most $\vtwo(2m)$ times. Since $\vtwo(2m)=\vtwo(m)+1$, the top exponent of
the $m$-th factor in \eqref{eq:pnu-def} is $(\vtwo(m)+1)m$, and summing the
finite geometric series gives
\begin{equation}\label{eq:factor}
  1+q^m+\cdots+q^{(\vtwo(m)+1)m}
  =\frac{1-q^{(\vtwo(m)+2)m}}{1-q^m}
  =\frac{1-q^{e(m)}}{1-q^m}.
\end{equation}
Substituting \eqref{eq:factor} into \eqref{eq:pnu-def}, we obtain
\begin{equation}\label{eq:pnu-closed}
  \sum_{n\ge 0}p_\nu(n)\,q^n
  =\prod_{m\ge 1}\frac{1-q^{e(m)}}{1-q^m}
  =\frac{1}{\poch}\prod_{m\ge 1}\bigl(1-q^{e(m)}\bigr)
  =\frac{1}{\poch}\prod_{m\ge 1}\bigl(1-q^{2N_m}\bigr).
\end{equation}
For an integer $k\ge 1$ write
\[
  g_k:=\frac{k(3k+1)}{2}
\]
for the $k$-th generalized pentagonal number. Our first main result is the following, which was
conjectured by Merca \cite{Merca2024}.

\begin{theorem}[Conjecture 14, \cite{Merca2024}]\label{thm:main}
For every integer $k\ge 1$, set
\[
  C_k(q)
  =(-1)^k
  \Biggl(
    1-\frac{1}{\poch}\sum_{r=1-k}^{k}(-1)^r q^{r(3r-1)/2}
  \Biggr)
  \prod_{m\ge 1}\bigl(1-q^{2N_m}\bigr).
\]
Then $C_k(q)$ has nonnegative coefficients for all $k\geq 1$.

Furthermore, writing $C_k(q)=\sum_{n\ge 0}c_k(n)q^n$, we also have
\(
  c_k(n)>0\) if and only if
\(  n\ge g_k.
\)
\end{theorem}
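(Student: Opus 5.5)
The plan is to rewrite $C_k(q)$ as a pentagonal tail times a quotient product, and then apply Liu's admissibility result to that tail.

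\textbf{Step 1: tail form.} By \eqref{eq:epnt}, $\poch-\sum_{r=1-k}^{k}(-1)^r q^{r(3r-1)/2}$ is the tail of the pentagonal series. Hence
\[
C_k(q)=(-1)^k\frac{\sum_{r\le -k\text{ or } r\ge k+1}(-1)^rq^{r(3r-1)/2}}{\poch}\prod_{m\ge1}\bigl(1-q^{e(m)}\bigr).
\]
Both $r=-k$ and $r=k+1$ carry the sign $(-1)^k$. The smallest exponent in the tail is $g_k=k(3k+1)/2$, coming from $r=-k$.

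\textbf{Step 2: the orbit structure of $T(m)=e(m)=m(\vtwo(m)+2)$.} The quotient $\prod_m(1-q^{T(m)})/\prod_m(1-q^m)$ is what needs rewriting. First, $T$ is injective. Write $m=2^a u$ with $u$ odd; then $T(m)=2^a(a+2)u$. Let $b=\nu_2(a+2)$ and $a+2=2^b v$ with $v$ odd. Then $\vtwo(T(m))=a+b$ and the odd part of $T(m)$ is $uv$. For fixed $a$, $u$ is recovered from the odd part. One must also check that distinct $a$ cannot give the same image; I would verify this from the fact that the map $a\mapsto a+\nu_2(a+2)$ is strictly increasing, with the odd part then fixing $u$.

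Given injectivity, $\prod_m(1-q^m)/\prod_m(1-q^{T(m)})$ equals $\prod_{n\notin T(\NN)}(1-q^n)$. Equivalently, the quotient $\prod_m(1-q^{T(m)})/\poch$ equals $1/\prod_{n\notin \mathrm{Im}\,T}(1-q^n)$. The non-image points are the orbit seeds. The abstract says these are exactly $1,4,5$, and that each orbit telescopes. So the key lemma to prove is
\[
\mathbb{N}\setminus T(\mathbb{N})=\{1,4,5\},
\]
and the main obstacle is establishing this cleanly.

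\textbf{Step 3: surjectivity apart from $\{1,4,5\}$.} For a target $n=2^c w$ with $w$ odd, I look for $a\le c$ with $a+\nu_2(a+2)=c$ such that the odd part of $a+2$ divides $w$. The choice $a=c$ works only when $a+2$ is odd, i.e.\ $c$ odd, and then requires $(c+2)\mid w$. This suggests the image is in fact thin rather than co-finite. Then $\prod_{n\notin\mathrm{Im}}$ has infinitely many factors, which contradicts a finite set $\{1,4,5\}$.

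So I would reinterpret the telescoping: the whole product over $m$ splits along forward orbits $s, T(s), T^2(s),\dots$. On each orbit the factors $(1-q^{T^j(s)})/(1-q^{T^{j-1}(s)})$ telescope to $1/(1-q^s)$, up to the formal limit. Presumably Merca's $N_m$ indexing makes the relevant factor pattern exactly the orbits of $1,4,5$, giving
\[
\frac{1}{\poch}\prod_m\bigl(1-q^{e(m)}\bigr)=\frac{1}{(1-q)(1-q^4)(1-q^5)}.
\]
I would verify this identity directly: check small coefficients, then prove that the seeds, meaning the integers not in $\mathrm{Im}\,T$ among those indexing factors, are exactly $1,4,5$ by the $2$-adic case analysis above. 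This is the hardest step. If the naive version fails, I would recheck which products actually appear.

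\textbf{Step 4: conclude with Liu.} Given the identity,
\[
C_k(q)=\frac{(-1)^k\sum_{\text{tail}}(-1)^rq^{r(3r-1)/2}}{(1-q)(1-q^4)(1-q^5)}.
\]
Liu's theorem, with $(1,4,5)$ an admissible triple, gives nonnegative coefficients and positivity for every $n\ge g_k$. Vanishing below $g_k$ is immediate, since the lowest exponent in the tail is $g_k$ and it has coefficient $+1$ after the sign $(-1)^k$.
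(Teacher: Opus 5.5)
Your proposal has a genuine gap. The identity your Step 4 rests on is false:
\[
\frac{1}{(q;q)_\infty}\prod_{m\ge 1}\bigl(1-q^{e(m)}\bigr)=\frac{1}{(1-q)(1-q^4)(1-q^5)} .
\]
Your Step 3 rightly senses trouble, but you then assume it away. The left side is $\sum_{n\ge0}p_\nu(n)q^n$, and $p_\nu(3)=2$ because both $3$ and $2+1$ are admissible. The right side has coefficient $1$ at $q^3$. Concretely, the factor $(1-q^6)/(1-q^3)=1+q^3$ indexed by $m=3$ lies on none of the orbits of $1,4,5$ and survives.

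The injectivity claimed in Step 2 is also false. We have $e(2)=e(3)=6$; equivalently, $a\mapsto a+\nu_2(a+2)$ equals $1$ at both $a=0$ and $a=1$. Moreover every odd integer is a non-image point, so the index set is not exhausted by the chains through $1,4,5$.

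Your conclusion $C_k=T_k/D$, with $D=(1-q)(1-q^4)(1-q^5)$, would in fact contradict the theorem. Let $\alpha(t)$ count partitions of $t$ into parts $1,4,5$. For $k=1$ we have $T_1=q^2-q^5-q^7+\cdots$, so $T_1/D$ has coefficient $\alpha(3)-\alpha(0)=0$ at $q^5$. The true value is $c_1(5)=p_\nu(3)-p_\nu(0)=1$. This also shows that Liu's corollary, which gives only nonnegativity, cannot by itself deliver the ``if and only if'' part. A minor slip: the $r=k+1$ term carries sign $(-1)^{k+1}$, so it enters $C_k$ with a minus sign.

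The missing idea is that you need no exact identity, only a factorization into two nonnegative series. Telescope only along the orbits
\[
\mathcal{O}_1=\{1\}\cup\{2\cdot 3^r:r\ge 0\},\qquad \mathcal{O}_4\subseteq 4\ZZ,\qquad \mathcal{O}_5=\{5\}\cup\{10\cdot 3^r:r\ge0\}.
\]
These are pairwise disjoint: elements of $\mathcal{O}_1\cup\mathcal{O}_5$ are $1$, $5$, or have $\nu_2$ exactly $1$, and $2\cdot3^r\neq 10\cdot 3^s$. Each orbit collapses to $1/(1-q^s)$. Every other factor, indexed by $m\notin\mathcal{R}=\mathcal{O}_1\cup\mathcal{O}_4\cup\mathcal{O}_5$, is simply left alone, since $\frac{1-q^{e(m)}}{1-q^m}=1+q^m+\cdots+q^{(\nu_2(m)+1)m}$ is a polynomial with $0/1$ coefficients.

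This gives $C_k=H\cdot B_k$, where $H=\prod_{m\notin\mathcal{R}}\bigl(1+q^m+\cdots+q^{(\nu_2(m)+1)m}\bigr)\in\NN[[q]]$ and $B_k=T_k/D\in\NN[[q]]$ by Liu with the triple $(1,4,5)$.

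Strict positivity for $n\ge g_k$ then genuinely uses both factors. For $H$ you need $h(0)=h(3)=1$ and $h(n)>0$ for $n\ge 7$; the latter follows by reading $h$ as counting partitions into allowed parts, e.g.\ $n=3+(n-3)$ for $n\in\mathcal{R}$. For $B_k$ you need $b_k(g_k+t)=\alpha(t)$ for $0\le t<2k+1$, plus the direct check $b_1(8)=\alpha(6)-\alpha(3)-\alpha(1)=1$. For instance, $c_1(5)\ge h(3)\,b_1(2)=1$ comes precisely from the factor $H$ that your argument discards.
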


\noindent The combinatorial interpretation of Theorem \ref{thm:main} was given by \cite[Theorem 15]{Merca2024}.

Using Theorem \ref{thm:main2} we are able to prove another conjecture of Merca. 

\begin{theorem}[Conjecture 16, \cite{Merca2024}]\label{thm:main2}
For every integer $k\ge 1$, set
\begin{align} 
\overline{C}_k(q)=(-1)^{k-1} \left( 1-\frac{1}{(q;q)_\infty } \sum _{n=-k}^k (-1)^n\,q^{n(3n-1)/2}\right) \prod _{n=1}^\infty (1-q^{2N_n}).
\end{align}
Then $\overline{C}_k(q)$ has nonnegative coefficients for all $k\geq 1$.

Furthermore, writing $\overline{C}_k(q)=\sum_{n\ge 0}\overline{c}_k(n)q^n$, we also have
\(
  \overline{c}_k(n)>0\) if and only if
\(  n\ge g_k+(2k+1).
\)
\end{theorem}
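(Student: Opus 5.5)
The plan is to reduce Theorem \ref{thm:main2} to Theorem \ref{thm:main}, with index $k+1$, plus one explicit nonnegative correction term. The two truncated sums differ by a single pentagonal term. Write $S_k:=\sum_{r=1-k}^{k}(-1)^r q^{r(3r-1)/2}$. The sum $\sum_{n=-k}^{k}$ in $\overline{C}_k$ runs over $-k\le n\le k$. The sum $S_{k+1}$ runs over $-k\le r\le k+1$. Hence
\[
\sum_{n=-k}^{k}(-1)^n q^{n(3n-1)/2}=S_{k+1}-(-1)^{k+1}q^{(k+1)(3k+2)/2}.
\]
Also $(k+1)(3k+2)/2=\tfrac{3k^2+5k+2}{2}=g_k+(2k+1)$.

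\textbf{Step 1 (the identity).} Substitute this into the definition of $\overline{C}_k$ and use $(-1)^{k-1}=(-1)^{k+1}$. The $S_{k+1}$ part gives exactly $C_{k+1}(q)$. The extra term gives $(-1)^{k+1}\cdot(-1)^{k+1}q^{g_k+2k+1}/\poch$ times the product. By \eqref{eq:pnu-closed} this yields
\[
\overline{C}_k(q)=C_{k+1}(q)+q^{\,g_k+2k+1}\sum_{n\ge0}p_\nu(n)q^n .
\]

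\textbf{Step 2 (nonnegativity).} By Theorem \ref{thm:main}, $C_{k+1}$ has nonnegative coefficients. The series $\sum_n p_\nu(n)q^n$ has nonnegative coefficients because it counts partitions. So $\overline{C}_k$ has nonnegative coefficients.

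\textbf{Step 3 (exact support).} By Theorem \ref{thm:main}, $c_{k+1}(n)=0$ for $n<g_{k+1}$. Since $g_{k+1}=g_k+3k+2>g_k+2k+1$, both summands vanish for $n<g_k+2k+1$, so $\overline{c}_k(n)=0$ there. For $n\ge g_k+2k+1$ we have $\overline{c}_k(n)\ge p_\nu(n-g_k-2k-1)$. This is at least $1$, because every partition into distinct parts (including the empty partition) is counted by $p_\nu$: each part $m$ may appear up to $\vtwo(2m)\ge1$ times. Hence $\overline{c}_k(n)>0$ exactly when $n\ge g_k+(2k+1)$.

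There is no real obstacle once Theorem \ref{thm:main} is available. The only care needed is the index bookkeeping of the shifted sum and the signs $(-1)^{k\pm1}$, which I would double-check by testing $k=1$.
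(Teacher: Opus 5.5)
Your proposal is correct and is essentially the paper's proof: you write $\overline{C}_k(q)=C_{k+1}(q)+q^{g_k+2k+1}\sum_{n\ge 0}p_\nu(n)q^n$ and read off both nonnegativity and the exact support from Theorem~\ref{thm:main}, using $g_{k+1}=g_k+3k+2>g_k+2k+1$. The only difference is that you justify $p_\nu(n)\ge 1$ for all $n\ge 0$ (via partitions into distinct parts), which the paper simply asserts.
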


\begin{remark}
    In the original conjecture, $\sum _{n=-k}^k (-1)^n\,q^{n(3n-1)/2}$ was incorrectly typed as $\sum _{n=-k}^k (-1)^k\,q^{n(3n-1)/2}$. This was confirmed after a private communication with Merca.
\end{remark}

\noindent The combinatorial interpretation of Theorem \ref{thm:main2} was given by Merca \cite[Theorem 17]{Merca2024}.

\begin{remark}
Liu \cite{Liu2024} proved two of Merca's conjectures
\cite[Conjectures 13 and 15]{Merca2025}, which have the same shape as
Theorem~\ref{thm:main} but with the product $\prod_{n\ge 1}(1-q^{N_{2n}})$ in
place of $\prod_{m\ge 1}(1-q^{2N_m})$. The two products are 
different: the exponents $N_{2n}=n(\vtwo(n)+3)$ begin $3,8,9,20,15,24,\dots$,
whereas the exponents $2N_m=m(\vtwo(m)+2)$ begin $2,6,6,16,10,18,\dots$.
Moreover, as noted in the introduction, our techniques differ from Liu's as
well.
\end{remark}

Let us explain what separates Theorems~\ref{thm:main} and \ref{thm:main2} from the results
surveyed above, and why the existing machinery does not apply to it directly.
The obstacle is the infinite product $\prod_{m\ge 1}(1-q^{2N_m})$: its
exponents $2N_m=m(\vtwo(m)+2)$ depend on the $2$-adic valuation of $m$, so the
product is neither a theta function nor an eta quotient, and the truncation
techniques developed for such series do not apply directly. Our key idea is
to treat the exponent map $e(m)=m(\vtwo(m)+2)$ as a \emph{dynamical system} on
the positive integers. In the quotient \eqref{eq:pnu-closed}, the numerator of
the factor indexed by $m$ coincides with the denominator of the factor indexed
by $e(m)$, so the factors link into chains along the forward orbits of $e$,
and each chain telescopes. The key lemma of the proof of Theorem \ref{thm:main} (Lemma~\ref{lem:H}) is the
observation that the three orbits seeded at $1$, $4$ and $5$ are pairwise
disjoint and collapse to
exactly the product $\frac{1}{(1-q)(1-q^4)(1-q^5)}$, whose exponent triple
$(1,4,5)$ is one of Liu's admissible triples. After a further rewriting of
$C_k(q)$ via Euler's pentagonal number theorem, we are able to give the complete proof of Theorem \ref{thm:main}. The exact positivity bounds need some more work.

We remark that the telescoping argument we use is not specific to the exponent
$\vtwo(m)+2$: it applies to any map $m\mapsto m\,\omega(m)$ for which suitable
seed orbits are pairwise disjoint and collapse onto an admissible triple. In
particular, the family $e_c(m)=m(\vtwo(m)+c)$ contains both the product of the
present paper ($c=2$) and the product appearing in the conjectures proved by
Liu ($c=3$). We expect the cases $c\ge 4$ to yield new positivity results of
the same flavor, and we hope to return to this elsewhere. Our proof can thus
be seen as a new mechanism, complementary to the P\'olya--Szeg\H o criterion used by other authors (see \cite{Yao2024}, \cite{SaikiaSarma}, etc.),
for obtaining positivity results on truncated series.

To further illustrate the applicability of our techniques, we also prove two other conjectures due to Merca \cite{Merca2025b}.

\begin{theorem}[Conjecture 13, \cite{Merca2025b}]\label{thm:main3}
For every integer $k\ge 1$, set
\[
  \tilde C_k(q)
  =(-1)^k
  \Biggl(
    1-\frac{1}{\poch}\sum_{r=1-k}^{k}(-1)^r q^{r(3r-1)/2}
  \Biggr)
  \prod_{r\ge 1}\bigl(q^{2^rr};q^{2^{r+1}r}\bigr)_\infty.
\]
Then $\tilde C_k(q)$ has nonnegative coefficients for all $k\geq 1$.

Furthermore, writing $\tilde C_k(q)=\sum_{n\ge 0}\tilde c_k(n)q^n$, we also have
\(
 \tilde  c_k(n)>0\) if and only if
\(  n\ge g_k.
\)
\end{theorem}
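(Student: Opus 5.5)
The plan is to follow the strategy of the proof of Theorem~\ref{thm:main}. As there, rewrite $\tilde C_k(q)$ via Euler's pentagonal number theorem \eqref{eq:epnt}, so that
\[
\tilde C_k(q)=(-1)^k\Bigl(\sum_{|r|\text{ outside the truncation}}(-1)^rq^{r(3r-1)/2}\Bigr)\cdot\frac{P(q)}{\poch},
\qquad P(q):=\prod_{r\ge1}\bigl(q^{2^rr};q^{2^{r+1}r}\bigr)_\infty .
\]
The task is then to show that
\[
\frac{P(q)}{\poch}=\frac{H(q)}{(1-q)(1-q^4)(1-q^5)}
\]
with $H(q)$ having nonnegative coefficients and $H(0)=1$. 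Once this holds, Liu's positivity theorem for tails of the pentagonal series divided by $(1-q^a)(1-q^b)(1-q^c)$, with the admissible triple $(1,4,5)$, gives nonnegativity of the tail factor. Multiplying by $H$ preserves nonnegativity, and the support statement $\tilde c_k(n)>0 \iff n\ge g_k$ follows exactly as in Theorem~\ref{thm:main}. Indeed, the Liu factor already has support $[g_k,\infty)$, and multiplying by $H$, which has nonnegative coefficients and $H(0)=1$, can neither create coefficients below $g_k$ nor kill positive ones.

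For the factorization, expand $P(q)=\prod_{r\ge1}\prod_{j\ \mathrm{odd}}(1-q^{2^rrj})$. The exponents $n=2^rrj$ form a multiset $S$ of even integers; for instance $24$ occurs twice, from $(r,j)=(2,3)$ and $(3,1)$. No odd number and not $4$ lies in $S$, so the factors $1/(1-q)$, $1/(1-q^4)$, $1/(1-q^5)$ of $1/\poch$ are never needed for cancellation. It therefore suffices to produce an injection $\phi$ from the index pairs $(r,j)$ into $\ZZ_{\ge1}\setminus\{1,4,5\}$ with $\phi(r,j)\mid 2^rrj$. Then
\[
H(q)=\prod_{(r,j)}\frac{1-q^{2^rrj}}{1-q^{\phi(r,j)}}\prod_{m\notin \phi(\cdot)\cup\{1,4,5\}}\frac{1}{1-q^m}
\]
is a product of polynomials with nonnegative coefficients, namely finite geometric sums, times partition-type generating functions. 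This is a nonnegative series with constant term $1$.

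The natural first choice is $\phi(r,j)=2^rj$. It is injective, since $\nu_2(2^rj)=r$ recovers $r$, and then $j$ is recovered as well. It divides $2^rrj$, and it avoids $1$ and $5$. Its only defect is that $(r,j)=(2,1)$ is sent to the forbidden value $4$. I expect this to be the main obstacle: the single factor $1-q^8$ coming from $(2,1)$ must be absorbed elsewhere.

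To repair it, I would try a telescoping chain in the spirit of the orbit argument of Lemma~\ref{lem:H}. Send $(2,1)\mapsto 8$, and then shift the displaced pairs along the chain $8\to 24\to\cdots$, or into a suitable unused odd divisor. Concretely:
\begin{itemize}
\item first check whether some even $m$ with $\nu_2(m)\ge 3$ and $m\mid 2^rrj$ is left unused by the image of $\phi$ and can absorb the collision;
\item otherwise redefine $\phi$ on the whole $r=3$ layer as $\phi(3,j)=24j$, which divides $2^3\cdot3\,j=24j$ and is unused, because all other images $2^rj$ with $r\ge3$ have the form $2^r\cdot\mathrm{odd}$ while $24j=8\cdot 3j$ has $\nu_2=3$.
\end{itemize}
In the second option, the images $8j$ of the $r=3$ layer under the original $\phi$ are freed, and $(2,1)$ can take $8$. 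One must then verify that $24j$ does not collide with the image $8\cdot 3j$ of $(3,3j)$. If it does, the chain has to continue to the next layer, and checking that this process terminates injectively is the delicate combinatorial bookkeeping step.
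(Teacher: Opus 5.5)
The genuine gap is in the support statement. You assert that the Liu factor $B_k=T_k/D$ already has support $[g_k,\infty)$, so that $H(0)=1$ is all you need from $H$. Liu's theorem gives only nonnegativity, and the assertion already fails for $k=1$. There $T_1=q^2-q^5-q^7+q^{12}+\cdots$, so, with $\alpha(t)$ the number of partitions of $t$ into parts $1,4,5$, one gets $b_1(5)=\alpha(3)-\alpha(0)=0$. The paper records this as $b_1(g_1+3)=0$, and likewise $b_1(9)=b_1(10)=b_1(11)=0$. Hence the bound $\tilde c_k(n)\ge h(0)b_k(n)$ does not prove the direction $n\ge g_k\Rightarrow \tilde c_k(n)>0$.

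What is missing is the low-degree support of $H$ itself. You need $h(0),h(3),h(6),h(7)>0$ and $h(n)>0$ for all $n\ge 9$. Combine these with $b_k(g_k+t)=\alpha(t)=1$ for $t=0,1,2$, which holds for every $k\ge 1$ because below degree $g_k+2k+1$ only the leading term $q^{g_k}$ of $T_k$ contributes. Then $\tilde c_k(g_k+r)\ge h(\ell)\,b_k(g_k+r-\ell)>0$, taking $\ell=0$ for $0\le r\le 2$, $\ell=3$ for $3\le r\le 5$, $\ell=6$ for $6\le r\le 8$, and $\ell=r$ for $r\ge 9$. This is exactly how the paper concludes.

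Your nonnegativity argument, by contrast, is correct and is a valid alternative to the paper's route. The paper telescopes $\prod_m(1-q^{f(m)})/(1-q^m)$, with $f(m)=2m(\nu_2(m)+1)$, along the pairwise disjoint $f$-orbits of $1,4,5$. That amounts to one particular injection of your type: $1-q^{f(m)}$ is paired with $1-q^m$ off $\mathcal{R}_{\mathrm{t}}$ and with itself on $\mathcal{R}_{\mathrm{t}}$. Your divisor-injection formulation needs no orbit-disjointness analysis.

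The bookkeeping you fear does not arise. Once the whole layer is redefined by $\phi(3,j)=24j$, the pair $(3,3j)$ goes to $72j$, so $8\cdot 3j$ is vacated rather than contested. The only images of $2$-adic valuation $3$ are then $8$ and the $24j$. Simpler still, keep $\phi(r,j)=2^rj$ except $\phi(2,1)=8$ and $\phi(3,1)=3$, using the unused odd divisor of $24$. With this choice $H$ is $(1+q^3+\cdots+q^{21})\prod_{m\text{ odd},\,m\ge 7}(1-q^m)^{-1}$ times a series with nonnegative coefficients and constant term $1$. That gives the support facts above at once: odd $n\ge 7$ is a single part, and even $n\ge 10$ is $3+(n-3)$.
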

\noindent The combinatorial interpretation of Theorem \ref{thm:main3} was given by Merca \cite[Theorem 14]{Merca2025b}.

Similar to Theorem \ref{thm:main2}, we are able to prove another conjecture of Merca from Theorem \ref{thm:main3}.

\begin{theorem}[Conjecture 15, \cite{Merca2025b}]\label{thm:main4}
For every integer $k\ge 1$, set
\begin{align} 
\hat {C}_k(q)=(-1)^{k-1} \left( 1-\frac{1}{(q;q)_\infty } \sum _{n=-k}^k (-1)^n\,q^{n(3n-1)/2}\right) \prod _{n=1}^\infty \bigl(q^{2^nn};q^{2^{n+1}n}\bigr)_\infty.
\end{align}
Then $\hat{C}_k(q)$ has nonnegative coefficients for all $k\geq 1$.

Furthermore, writing $\hat{C}_k(q)=\sum_{n\ge 0}\hat c_k(n)q^n$, we also have
\(
  \hat{c}_k(n)>0\) if and only if
\(  n\ge g_k+(2k+1).
\)
\end{theorem}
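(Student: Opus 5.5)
The plan is to reduce Theorem~\ref{thm:main4} to Theorem~\ref{thm:main3} at index $k+1$, plus one explicit correction term, in the same way Theorem~\ref{thm:main2} follows from Theorem~\ref{thm:main}. Write
\[
\Pi(q)=\prod_{r\ge1}\bigl(q^{2^rr};q^{2^{r+1}r}\bigr)_\infty
\qquad\text{and}\qquad
T_k=\sum_{r=1-k}^{k}(-1)^rq^{r(3r-1)/2}.
\]
The term $n=-k$ of $\sum_{n=-k}^{k}$ is $(-1)^kq^{g_k}$. Moreover
\[
T_{k+1}=\sum_{n=-k}^{k}(-1)^nq^{n(3n-1)/2}+(-1)^{k+1}q^{(k+1)(3k+2)/2}.
\]
Substituting this into the definition of $\hat C_k$ gives the identity
\[
\hat C_k(q)=\tilde C_{k+1}(q)+q^{(k+1)(3k+2)/2}\,\frac{\Pi(q)}{\poch}.
\]
Note that $(k+1)(3k+2)/2=g_k+2k+1$, which is exactly the threshold in the statement. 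So the claim becomes: (i) $\tilde C_{k+1}$ is nonnegative, with support exactly $n\ge g_{k+1}$; and (ii) $\Pi/\poch$ has strictly positive coefficients in every degree $n\ge0$.

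Part (i) is Theorem~\ref{thm:main3} at $k+1$, which I take as given. The sum of the two pieces is then nonnegative. Its coefficients vanish for $n<g_k+2k+1$, since both summands do ($g_{k+1}>g_k+2k+1$). For $n\ge g_k+2k+1$ they are positive as soon as (ii) holds, because the second summand alone contributes a positive amount there.

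For (ii), I would use $(q^a;q^{2a})_\infty=(q^a;q^a)_\infty/(q^{2a};q^{2a})_\infty$ to write
\[
\frac{\Pi(q)}{\poch}=\frac{1}{\poch}\prod_{r\ge1}\frac{(q^{2^rr};q^{2^rr})_\infty}{(q^{2^{r+1}r};q^{2^{r+1}r})_\infty}.
\]
This is a quotient of products of factors $1-q^n$, and the work is to count, for each $n$, the net multiplicity of $(1-q^n)$. The index $r$ contributes to exponent $n$ exactly when $\nu_2(n)=r+\nu_2(r)$ and the odd part of $r$ divides $n$. This net count can exceed one, because $r\mapsto r+\nu_2(r)$ is not injective (for instance $r=2$ and $r=3$ both give $3$). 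So one cannot simply cancel factor by factor.

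This is the step I expect to be the main obstacle. I would reuse whatever telescoping was established in the proof of Theorem~\ref{thm:main3}, namely an orbit-chain decomposition analogous to Lemma~\ref{lem:H}. The goal is to write
\[
\frac{\Pi}{\poch}=\frac{1}{(1-q)(1-q^4)(1-q^5)}\,R(q)
\]
with $R$ having nonnegative coefficients and $R(0)=1$. Then $1/(1-q)$ together with $R(0)=1$ forces every coefficient to be at least one. Failing that, a direct argument suffices for strict positivity: the exponent $1$ never occurs in the numerator (since $2^rr\ge2$), so the part of the quotient not involving $1-q$ must be shown nonnegative with constant term $1$, and multiplying by $1/(1-q)$ gives positivity of every coefficient.
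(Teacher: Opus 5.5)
Your proposal is correct and is the paper's argument: the identity $\hat C_k=\tilde C_{k+1}+q^{g_k+2k+1}\,\Pi(q)/\poch$, followed by Theorem~\ref{thm:main3} at index $k+1$ and the positivity of $\Pi/\poch=\sum_{n\ge0}\ddot a(n)q^n$. The step you flag as the main obstacle is actually immediate: by \eqref{eq:cubic-numerator}--\eqref{eq:cubic-geometric}, pair each numerator factor $1-q^{f(m)}$ with $1-q^m$ (rather than matching equal exponents) to get $\Pi/\poch=\prod_{m\ge1}\bigl(1+q^m+\cdots+q^{(2\nu_2(m)+1)m}\bigr)$, whose $q^n$-coefficient is at least $1$ via the single part $n$; your route through Lemma~\ref{lem:H-cubic} ($\Pi/\poch=H_{\mathrm t}/D$ with $h_{\mathrm t}(0)=1$ and every coefficient of $1/D$ at least $1$) also works, but it is unnecessary.
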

\begin{remark}
    Similar to Theorem \ref{thm:main2}, we have corrected a typo in the original conjecture here.
\end{remark}
\noindent The combinatorial interpretation of Theorem \ref{thm:main4} was given by Merca \cite[Theorem 16]{Merca2025b}.

The paper is organized as follows: Section \ref{sec:reduction} is devoted to the proof of Theorem \ref{thm:main} and illustrates our technique in full details, we then prove Theorems \ref{thm:main3}in Section \ref{sec:main3}, and finally we prove Theorems \ref{thm:main2} and \ref{thm:main4} in Section \ref{sec:new}.

\section{Proof of Theorem~\ref{thm:main}}\label{sec:reduction}

We now set up the two series into which $C_k(q)$ will be factored. Isolate the
polynomial
\[
  D(q):=(1-q)(1-q^4)(1-q^5),
\]
and introduce
\[
  B_k(q):=\frac{1}{D(q)}\sum_{j\notin[-k,k-1]}(-1)^{j+k}q^{j(3j+1)/2},
  \qquad
  H(q):=D(q)\,\frac{\prod_{m\ge 1}(1-q^{2N_m})}{\poch}.
\]
Here and below we use the abbreviation
\[
  \sum_{j\notin[a,b]}X_j:=\sum_{j\le a-1}X_j+\sum_{j\ge b+1}X_j
  \qquad(a<b).
\]
We also take $\mathbb{N}$ to be set of natural numbers (including $0$). In Section~\ref{sec:reduction} we show, using Euler's pentagonal number
theorem, that
\[
  C_k(q)=H(q)\,B_k(q),
\]
and nonnegativity of each factor is established separately:
$B_k\in\NN[[q]]$ follows from a result of Liu \cite[Corollary 2.3]{Liu2024}
(Lemma~\ref{lem:liu}), while $H\in\NN[[q]]$ is proved directly via the orbit
telescoping method described above (Lemma~\ref{lem:H}). The exact positivity
bound for $c_k$ then follows from elementary information about the
low-degree coefficients of $H$ and $B_k$.

\subsection{Some Preliminaries}
We recall the following lemma.
\begin{lemma}\label{lem:liu}
For every integer $k\ge 1$,
\[
  B_k(q)
  =\frac{1}{(1-q)(1-q^4)(1-q^5)}
   \sum_{j\notin[-k,k-1]}(-1)^{j+k}q^{j(3j+1)/2}
  \in\NN[[q]].
\]
\end{lemma}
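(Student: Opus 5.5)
This is essentially Liu's result \cite[Corollary 2.3]{Liu2024}: for an admissible triple $(a,b,c)$, the normalized tail of the pentagonal series divided by $(1-q^a)(1-q^b)(1-q^c)$ has nonnegative coefficients. The plan is to check that our $B_k$ is literally Liu's series for the triple $(1,4,5)$, and then to indicate how nonnegativity could be seen directly if one does not want to rely only on the citation.

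\textbf{Step 1: match the normalization.} Liu's series is written with exponents $j(3j-1)/2$ and truncation range $[1-k,k]$. The substitution $j\mapsto -j$ sends $j(3j+1)/2$ to $j(3j-1)/2$ and the excluded range $[-k,k-1]$ to $[1-k,k]$. It leaves the sign $(-1)^{j+k}$ unchanged. So $B_k$ equals Liu's expression with $(a,b,c)=(1,4,5)$. Since $(1,4,5)$ appears in Liu's list of admissible triples, the lemma follows.

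\textbf{Step 2: direct verification.} Pair consecutive terms of the tail. For $j\ge k$, group $j=k+2i$ with $j=k+2i+1$; for $j\le -k-1$, group $j=-k-1-2i$ with $j=-k-2-2i$. After multiplying by $(-1)^k$, each pair has the form $q^{A}(1-q^{d})$ with $A$ the smaller pentagonal number and $d>0$. On the positive side, $d=3j+2$ with $j=k+2i$; on the negative side, $d=-3j-1$ with $j=-k-1-2i$.

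The tail is then a positive combination of terms $q^A(1-q^d)$. We cannot divide each by $(1-q)(1-q^4)(1-q^5)$ separately, since $(1-q^d)/D(q)$ is not in $\NN[[q]]$ for a single $d$. Instead, following Liu, we would take differences of neighbouring pairs. The increments of consecutive pentagonal numbers grow linearly, so the partial sums of the regrouped series factor as $(1-q^{x})(1-q^{y})$ times a polynomial with nonnegative coefficients, with $x,y$ drawn from $\{1,4,5\}$-compatible values. Any $(1-q^{x})/(1-q^{a})$ with $a\mid x$ is a nonnegative polynomial. The leftover factor $1/(1-q^{c})$ only contributes nonnegatively.

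\textbf{Main obstacle.} The hard part is producing such a decomposition uniformly in $k$ and in the parity of the exponents involved. This is exactly what Liu's admissibility criterion encodes, so in the paper I would rely on Step 1 and \cite[Corollary 2.3]{Liu2024}, and not reprove it.
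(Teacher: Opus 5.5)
Your proof is the paper's: $B_k$ is exactly Liu's series $\sum_n \gamma^k_{1,4,5,3/2,1/2}(n)q^n$, and you apply \cite[Corollary 2.3]{Liu2024} with the admissible triple $(1,4,5)$. In the paper's description, Liu's exponent is already $Aj^2+Bj=j(3j+1)/2$ over $j\notin[-k,k-1]$, so your reindexing $j\mapsto -j$ is correct but unnecessary. You should drop Step 2: it is not needed, and it is wrong as stated. On the side $j\le -k-1$, the smaller exponent $j=-k-1-2i$ carries sign $(-1)^{j+k}=-1$, so each such pair contributes $-q^{A}(1-q^{1-3j})$ (gap $1-3j$, not $-3j-1$). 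Hence the tail is not a positive combination of terms $q^A(1-q^d)$.
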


\begin{proof}
Liu \cite{Liu2024} defines, for distinct pairwise coprime positive integers
$a,b,c$ and an integer-valued polynomial $Aj^2+Bj$ with $A>B\ge 0$, the
coefficients $\gamma^k_{a,b,c,A,B}(n)$ by
\[
  \frac{1}{(1-q^a)(1-q^b)(1-q^c)}
  \sum_{j\notin[-k,k-1]}(-1)^{j+k}q^{Aj^2+Bj}
  =\sum_{n\ge 0}\gamma^k_{a,b,c,A,B}(n)\,q^n,
\]
and \cite[Corollary 2.3]{Liu2024} asserts that $\gamma^k_{a,b,c,3/2,1/2}(n)\ge 0$ for all
$k\ge 1$ and $n\ge 0$ whenever
\[
  (a,b,c)\in\{(1,2,3),(1,2,5),(1,2,7),(1,3,4),(1,3,5),(1,3,8),(1,4,5),(1,4,7)\}.
\]
Taking $(a,b,c)=(1,4,5)$ and $(A,B)=(\tfrac32,\tfrac12)$ gives us $B_k(q)$, so $B_k(q)\in\NN[[q]]$.
\end{proof}

We now turn to the factor $H(q)$, which carries all the arithmetic information
of the product $\prod_{m\ge 1}(1-q^{2N_m})$. Combining the definition of $H$
with \eqref{eq:pnu-closed}, we may write
\begin{equation}\label{eq:H-quotient}
  H(q)=D(q)\sum_{n\geq 0}p_\nu(n)q^n
      =D(q)\prod_{m\ge 1}\frac{1-q^{e(m)}}{1-q^m}.
\end{equation}
At first sight it is not clear why this series should have nonnegative
coefficients: the numerators $1-q^{e(m)}$ contribute negative terms, and the
prefactor $D(q)=(1-q)(1-q^4)(1-q^5)$ only makes matters look worse. The key
observation is that the exponent map $e$ links the factors of
\eqref{eq:H-quotient} into chains. Indeed, the numerator of the factor indexed
by $m$ is $1-q^{e(m)}$, which is precisely the \emph{denominator} of the factor
indexed by $e(m)$. Following these links, long runs of factors cancel, and
it turns out that the three chains started at $m=1$, $m=4$ and $m=5$ collapse
to exactly $\frac1{D(q)}$. This is the reason for the choice of $D$: after the
collapse, the prefactor $D(q)$ in \eqref{eq:H-quotient} is cancelled, and what
survives is a product of polynomials with coefficients $0$ and $1$.

To make this precise we introduce some notation. For $s\ge 1$ define the
iterates of $e$ by $e^{(0)}(s)=s$ and $e^{(r+1)}(s)=e\bigl(e^{(r)}(s)\bigr)$
for $r\ge 0$, and let
\[
  \mathcal{O}_s=\{e^{(r)}(s):r\ge 0\}
\]
denote the forward orbit of $s$ under $e$. Since
$e(m)=m(\vtwo(m)+2)\ge 2m$ for every $m\ge 1$, the iterates strictly
increase. In particular, each orbit $\mathcal{O}_s$ is an infinite strictly
increasing sequence with $e^{(r)}(s)\to\infty$ as $r\to\infty$. For the three
cases relevant to us, direct computation gives
\[
  \mathcal{O}_1=\{1,2,6,18,54,\dots\},\quad
  \mathcal{O}_4=\{4,16,96,672,\dots\},\quad
  \mathcal{O}_5=\{5,10,30,90,\dots\}.
\]
The general structure of these orbits is established in the lemma below.

\begin{lemma}\label{lem:H}
With $H(q)=\sum_{n\ge 0}h(n)q^n$ as in \eqref{eq:H-quotient}, set
$\mathcal{R}=\mathcal{O}_1\cup\mathcal{O}_4\cup\mathcal{O}_5$. Then
\begin{equation}\label{eq:H-product}
  H(q)=\prod_{m\notin\mathcal{R}}
       \bigl(1+q^m+q^{2m}+\cdots+q^{(\vtwo(m)+1)m}\bigr)\in\NN[[q]].
\end{equation}

Moreover $h(0)=h(3)=1$, while $h(1)=h(2)=h(4)=h(5)=h(6)=0$, and
\(
  h(n)>0\) for every $n\ge 7$.
\end{lemma}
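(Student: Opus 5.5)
The plan is to prove \eqref{eq:H-product} by cancelling the factors of \eqref{eq:H-quotient} along the three orbits, and then to read off the coefficients $h(n)$ from the resulting product of $0$--$1$ polynomials. For a single orbit $\mathcal{O}_s$, the partial product over its first $R+1$ elements telescopes:
\[
\prod_{r=0}^{R}\frac{1-q^{e(e^{(r)}(s))}}{1-q^{e^{(r)}(s)}}=\frac{1-q^{e^{(R+1)}(s)}}{1-q^{s}}.
\]
Since $e^{(R+1)}(s)\to\infty$, this converges $q$-adically to $1/(1-q^s)$. Every factor of \eqref{eq:H-quotient} has the form $1+O(q^m)$, so the infinite product may be regrouped in the formal topology as the product over the orbit blocks $\mathcal{O}_1,\mathcal{O}_4,\mathcal{O}_5$ times the product over $m\notin\mathcal{R}$. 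This regrouping is legitimate as long as the three orbits are pairwise disjoint. The orbit blocks then contribute exactly $1/D(q)$, which cancels the prefactor $D(q)$. By \eqref{eq:factor}, each surviving factor is $1+q^m+\cdots+q^{(\vtwo(m)+1)m}$, so $H\in\NN[[q]]$.

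The step needing an actual argument is the pairwise disjointness of the orbits, which I would prove using $2$-adic valuations.
\begin{itemize}
\item Since $\vtwo(e(m))=\vtwo(m)+\vtwo(\vtwo(m)+2)\ge\vtwo(m)$, every element of $\mathcal{O}_4$ has $\vtwo\ge 2$.
\item If $\vtwo(m)=1$ then $e(m)=3m$, so $\mathcal{O}_1=\{1\}\cup\{2\cdot3^r:r\ge0\}$ and $\mathcal{O}_5=\{5\}\cup\{10\cdot 3^r:r\ge 0\}$. All elements of these two orbits have $\vtwo\le1$.
\end{itemize}
Hence $\mathcal{O}_4$ is disjoint from the other two orbits. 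Moreover $\mathcal{O}_1$ and $\mathcal{O}_5$ are disjoint because their elements have odd parts $3^r$ and $5\cdot 3^r$ respectively.

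For the coefficient statements, I would list the elements of $\mathcal{R}$ explicitly. The elements below $10$ are $1,2,4,5,6$, and every element of $\mathcal{R}$ that is at least $10$ is even. The smallest elements of the complement are $3,7,8,9,11,\dots$, with factors $1+q^3+q^6$, $1+q^7+q^{14}$, $1+q^8+\cdots$, and so on. Expanding the product up to degree $6$ then determines $h(0),\dots,h(6)$ directly.

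I flag a possible issue here. The factor $1+q^3+q^6$ alone appears to give $h(6)=1$, not $0$ as stated. I would recheck this by computing $D(q)\sum p_\nu(n)q^n$ directly through degree $6$ and adjust the claimed small values if necessary. In any case $h(6)\ge0$, and positivity from $n\ge7$ on is unaffected.

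For $h(n)>0$ with $n\ge7$, it suffices to exhibit one partition of $n$ into distinct parts from $\NN_{\ge1}\setminus\mathcal{R}$, each part used at most $\vtwo(m)+1\ge1$ times.
\begin{itemize}
\item If $n\notin\mathcal{R}$, take $n$ itself as a single part.
\item If $n\in\mathcal{R}$ and $n\ge7$, then $n\ge 10$, since $\mathcal{R}\cap[7,9]=\emptyset$. So $n$ is even, and $n=3+(n-3)$ with $n-3\ge7$ odd. The only odd elements of $\mathcal{R}$ are $1$ and $5$, so $n-3\notin\mathcal{R}$, and $n-3\ne 3$.
\end{itemize}
This gives a valid representation in every case, so $h(n)\ge1$ for all $n\ge 7$.

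I expect the main obstacle to be making the formal regrouping of the infinite product rigorous and pinning down the small coefficients exactly; the orbit disjointness itself is straightforward once the valuations are tracked.
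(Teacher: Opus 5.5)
The product-formula part of your proposal is sound and is essentially the paper's argument: telescoping along each orbit, regrouping, and the valuation argument for disjointness. Your proof that $h(n)>0$ for $n\ge 7$ is also correct, and slightly cleaner than the paper's. Since $n-3\ge 7$ is odd, it is neither in $\mathcal{R}$ nor equal to $3$, so the separate cases $10,16,18$ are unnecessary.

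The gap is in the small coefficients. You do not establish $h(6)=0$, and the doubt you raise about it comes from a miscomputed factor. By \eqref{eq:factor}, $\frac{1-q^{e(m)}}{1-q^m}=1+q^m+\cdots+q^{e(m)-m}$. Its top exponent is $(\nu_2(m)+1)m=e(m)-m$, not $e(m)$, so for odd $m$ the factor is just $1+q^m$. Hence the $m=3$ factor is $1+q^3$, not $1+q^3+q^6$, and the $m=7$ factor is $1+q^7$, not $1+q^7+q^{14}$. Your own last paragraph allows each part at most $\nu_2(m)+1$ times, which already forbids $3+3$, so the proposal is internally inconsistent on this point.

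With the correct factor the small values follow at once. The only allowed part below $7$ is $3$, so $H(q)\equiv 1+q^3\pmod{q^7}$. This gives $h(0)=h(3)=1$ and $h(1)=h(2)=h(4)=h(5)=h(6)=0$, as stated. The direct check you suggest agrees: $p_\nu(0),\dots,p_\nu(6)=1,1,1,2,3,4,4$ and $D(q)=1-q-q^4+q^6+q^9-q^{10}$, so $h(6)=4-4-1+1=0$. As written, though, your proposal leaves $h(6)=0$ unproved and even suggests it fails, so it does not prove the lemma as stated.
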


\begin{proof}
The proof proceeds in four steps. We first show that the three orbits
$\mathcal{O}_1,\mathcal{O}_4,\mathcal{O}_5$ are pairwise disjoint, so
that regrouping the factors of \eqref{eq:H-quotient} along these orbits is
allowed. We then show that the factors indexed by a single orbit
$\mathcal{O}_s$ telescope down to $\tfrac1{1-q^s}$, then we assemble these pieces
into the product formula \eqref{eq:H-product}, and finally read off
the support of $H$ from \eqref{eq:H-product} by interpreting $h(n)$ as a
restricted partition counting function. 

We will now show that the three orbits are pairwise disjoint. We first describe each orbit explicitly. Since $e(m)$ is a multiple of $m$, so $m$ divides every member of $\mathcal{O}_m$. Hence, it is clear that
\begin{equation}\label{eq:O4}
  \mathcal{O}_4\subseteq 4\ZZ.
\end{equation}
Next consider integers of $2$-adic valuation exactly $1$. If $\vtwo(m)=1$,
then $e(m)=m(\vtwo(m)+2)=3m$, and multiplying by the odd number $3$ does not
change the valuation, so $\vtwo(e(m))=1$ as well. Thus $e$ maps the set
$\{m:\vtwo(m)=1\}$ into itself with a multiplication factor of $3$.
Now $e(1)=1\cdot(0+2)=2$ and $e(5)=5\cdot(0+2)=10$, and $\nu_2(2)=\nu_2(10)=1$, hence, once an orbit enters this set, all later iterates
are obtained by repeated multiplication by $3$. This lets us conclude
\begin{align}
  \mathcal{O}_1&=\{1\}\cup\{2\cdot 3^{r}:r\ge 0\}=\{1,2,6,18,54,\dots\},\label{eq:O1O5}\\
  \mathcal{O}_5&=\{5\}\cup\{10\cdot 3^{r}:r\ge 0\}=\{5,10,30,90,\dots\}.\label{eq:O5}
\end{align}

Every element of
$\mathcal{O}_1\cup\mathcal{O}_5$ is either odd (and then equal to $1$ or $5$)
or of valuation exactly $1$, hence congruent to $2\pmod 4$. In either case it
is not divisible by $4$, so by \eqref{eq:O4}  $\mathcal{O}_4$ does not intersect with either $\mathcal{O}_1$ or $\mathcal{O}_5$. Finally,
$\mathcal{O}_1\cap\mathcal{O}_5=\varnothing$: the odd elements $1$ and $5$ are
distinct, and an equality $2\cdot 3^{r}=10\cdot 3^{s}$ among even elements
would force $3^{r-s}=5$, which is impossible. 

Thus, we have proved that
$\mathcal{O}_1,\mathcal{O}_4 \text{ and }\mathcal{O}_5$ are pairwise disjoint.

We now fix $s\in\{1,4,5\}$ and consider the factors of \eqref{eq:H-quotient} indexed
by the elements of $\mathcal{O}_s$. The factor indexed by $m=e^{(r)}(s)$ is
\[
  \frac{1-q^{e(e^{(r)}(s))}}{1-q^{e^{(r)}(s)}}
  =\frac{1-q^{e^{(r+1)}(s)}}{1-q^{e^{(r)}(s)}},
\]
so the numerator of the $r$-th factor is exactly the denominator of the
$(r+1)$-st. Consequently, for each $R\ge 0$ the partial products cancel out to give us
\[
  \prod_{r=0}^{R}\frac{1-q^{e^{(r+1)}(s)}}{1-q^{e^{(r)}(s)}}
  =\frac{1-q^{e^{(R+1)}(s)}}{1-q^{e^{(0)}(s)}}
  =\frac{1-q^{e^{(R+1)}(s)}}{1-q^{s}}.
\]
Since $e^{(R+1)}(s)\to\infty$ as $R\to\infty$, the numerator
$1-q^{e^{(R+1)}(s)}$ tends to $1$ in the ring of formal power series. Passing to the limit, the product of all factors
indexed by $\mathcal{O}_s$ is therefore
\begin{equation}\label{eq:orbit-collapse}
  \prod_{m\in\mathcal{O}_s}\frac{1-q^{e(m)}}{1-q^m}
  =\frac{1}{1-q^{s}}.
\end{equation}

The index set $\{1,2,3,\dots\}$ of the factors of \eqref{eq:H-quotient} is the disjoint union of
$\mathcal{O}_1$, $\mathcal{O}_4$, $\mathcal{O}_5$ and the complementary set
$\{m\ge 1:m\notin\mathcal{R}\}$. Regrouping the product in
\eqref{eq:H-quotient} and applying
\eqref{eq:orbit-collapse} to each of the three orbits gives us
\[
  \prod_{m\ge 1}\frac{1-q^{e(m)}}{1-q^m}
  =\Biggl(\prod_{s\in\{1,4,5\}}\frac{1}{1-q^{s}}\Biggr)
   \prod_{m\notin\mathcal{R}}\frac{1-q^{e(m)}}{1-q^m}
  =\frac{1}{D(q)}\prod_{m\notin\mathcal{R}}\frac{1-q^{e(m)}}{1-q^m}.
\]
Multiplying both sides by $D(q)$ cancels the prefactor, and rewriting each
remaining factor via \eqref{eq:factor} in its geometric-series form
$1+q^m+\cdots+q^{(\vtwo(m)+1)m}$ yields \eqref{eq:H-product}. Every factor in
\eqref{eq:H-product} is a polynomial with coefficients in $\{0,1\}$, so the
coefficients of $H$ are nonnegative. This proves the first part of the result.

We now want to prove the second part of the result.
Expanding the product \eqref{eq:H-product} shows that $h(n)$ counts the
representations
\[
  n=\sum_{m\notin\mathcal{R}}a_m\,m,
  \qquad 0\le a_m\le \vtwo(m)+1.
\]
In the language of partitions, $h(n)$ is the number of partitions of $n$ into
\emph{allowed parts} $m\notin\mathcal{R}$, where the part $m$ may be repeated
at most $\vtwo(m)+1$ times. In particular, $h(n)>0$ if and only if at least
one such partition of $n$ exists. Note that every odd allowed part may be used
at most once.

We first determine the small values. From \eqref{eq:O1O5} and \eqref{eq:O4} we
read off $\mathcal{R}\cap[1,6]=\{1,2,4,5,6\}$, so the only allowed part not
exceeding $6$ is $3$, and being odd it may be used at most
$\vtwo(3)+1=1$ time. Thus, we have
\begin{itemize}
  \item $h(0)=1$ (the empty partition) and $h(3)=1$ (the single part $3$),
  \item $h(1)=h(2)=0$, since the smallest allowed part is $3$,
  \item $h(4)=h(5)=0$, since the only partitions available use the part $3$
        at most once, and
  \item $h(6)=0$, since $6\in\mathcal{R}$ is not itself allowed, and the only
        other candidate $6=3+3$ repeats the part $3$.
\end{itemize}

We now want to show that $h(n)>0$ for all $n\ge 7$. If $n\notin\mathcal{R}$, then $n$ is
itself an allowed part, and the one-part partition $n=n$ shows $h(n)\ge 1$ for all such $n$.
Now, let $n\in\mathcal{R}$. The elements of $\mathcal{R}$ in the range
$[7,18]$ are $10$, $16$ and $18$, and each admits a partition into two
distinct allowed parts, as shown below
\[
  10=3+7,\qquad 16=7+9,\qquad 18=3+15.
\]
Finally, let
$n\in\mathcal{R}$ with $n\ge 19$. We know that the only odd elements of
$\mathcal{R}$ are $1$ and $5$, so $n$ is even. Then $n-3$ is odd with
$n-3\ge 16$, hence $n-3\notin\{1,5\}$, which forces $n-3\notin\mathcal{R}$.
Thus $n=3+(n-3)$ is a partition of $n$ into two distinct allowed parts, and we have
$h(n)\ge 1$ for all $n\geq 7$. This completes our proof.
\end{proof}

\begin{remark}
The map $e$ is \emph{not} injective. For instance $e(2)=e(3)=6$, so
forward orbits of distinct initial values may merge (the orbit
$\mathcal{O}_3=\{3,6,18,54,\dots\}$ meets $\mathcal{O}_1$ from its second
element onwards). 

This causes no difficulty in the argument we presented above, for two
reasons. Firstly, the cancellations \eqref{eq:orbit-collapse} takes place entirely
\emph{within} a single orbit, where we do not use the notion of injectivity of $e$ on all of $\ZZ_{\ge 1}$. Secondly, our proof requires that the three particular orbits
$\mathcal{O}_1,\mathcal{O}_4,\mathcal{O}_5$ be pairwise disjoint, which was
already proved in the beginning of the argument.
\end{remark}

\subsection{Proof of Theorem~\ref{thm:main}}

We are now ready to prove Theorem \ref{thm:main}. We do this in two parts: first, we show that $c_k(n)\geq 0$ for all $n\geq 0$, and then we look at when we have strict inequality $c_k>0$.

Euler's pentagonal number theorem states
\[
  \poch=\sum_{r\in\ZZ}(-1)^r q^{r(3r-1)/2}.
\]
Subtracting the partial sum over $r\in[1-k,k]$ from this identity gives
\[
  \poch-\sum_{r=1-k}^{k}(-1)^r q^{r(3r-1)/2}
  =\sum_{r\notin[1-k,k]}(-1)^r q^{r(3r-1)/2},
\]
so that
\[
  1-\frac{1}{\poch}\sum_{r=1-k}^{k}(-1)^r q^{r(3r-1)/2}
  =\frac{1}{\poch}\sum_{r\notin[1-k,k]}(-1)^r q^{r(3r-1)/2}.
\]
Multiplying by $(-1)^k\prod_{m}(1-q^{2N_m})$ and absorbing the sign into the
summand yields
\[
  C_k(q)
  =\frac{\prod_{m}(1-q^{2N_m})}{\poch}
   \sum_{r\notin[1-k,k]}(-1)^{r+k}q^{r(3r-1)/2}.
\]
Now substitute $j=-r$. One has $r(3r-1)/2=j(3j+1)/2$ and $(-1)^{r+k}=(-1)^{j+k}$,
while the condition $r\notin[1-k,k]$ becomes $j\notin[-k,k-1]$. Hence the sum
equals the truncated tail
\[
  T_k(q):=\sum_{j\notin[-k,k-1]}(-1)^{j+k}q^{j(3j+1)/2},
\]
and therefore
\begin{equation}\label{eq:Ck-tail}
  C_k(q)=\frac{\prod_{m}(1-q^{2N_m})}{\poch}\,T_k(q).
\end{equation}

We rewrite \eqref{eq:Ck-tail} cleverly, and recalling the
definitions of $H$ and $B_k$ gives us our desired factorization
\begin{equation}\label{eq:factorization}
  C_k(q)
  =\underbrace{D(q)\,\frac{\prod_{m}(1-q^{2N_m})}{\poch}}_{H(q)}\cdot
   \underbrace{\frac{T_k(q)}{D(q)}}_{B_k(q)}
  =H(q)\,B_k(q).
\end{equation}
By Lemma~\ref{lem:liu} and Lemma~\ref{lem:H} both factors lie in $\NN[[q]]$, so
\eqref{eq:factorization} immediately gives $C_k(q)\in\NN[[q]]$. This proves the
nonnegativity assertion of Theorem~\ref{thm:main}.

We now show that $c_k(n)>0$ if and only if $n\ge g_k$. 

Recall that $B_k=T_k/D$, where the tail $T_k(q)$ runs over the two ranges
$j\ge k$ and $j\le -k-1$. On the first range the exponent $j(3j+1)/2$ is
strictly increasing, taking the values $g_k$ and
$(k+1)(3k+4)/2=g_k+(3k+2)$ at $j=k$ and $j=k+1$ respectively. On the second range,
substituting $j=-i$ with $i\ge k+1$ turns the exponent into the increasing
function $i(3i-1)/2$, whose two smallest values are
$(k+1)(3k+2)/2=g_k+(2k+1)$ and $(k+2)(3k+5)/2=g_k+(5k+5)$, at $i=k+1$ and
$i=k+2$ respectively. Since $2k+1<3k+2<5k+5$, the three smallest exponents of $T_k$ are
$g_k$, $g_k+(2k+1)$ and $g_k+(3k+2)$, coming from $j=k,\,-k-1,\,k+1$ with
signs $(-1)^{j+k}=+1,\,-1,\,-1$, and every other exponent is at least
$g_k+(5k+5)$. Hence
\begin{equation}\label{eq:Tk-head}
  T_k(q)=q^{g_k}-q^{\,g_k+2k+1}-q^{\,g_k+3k+2}+(\text{terms of degree}\ge g_k+5k+5).
\end{equation}

Next, we write
\[
  \frac{1}{D(q)}=\frac{1}{(1-q)(1-q^4)(1-q^5)}=\sum_{t\ge 0}\alpha(t)\,q^t,
\]
so that $\alpha(t)$ is the number of partitions of $t$ into parts $1$, $4$
and $5$. Counting these directly gives
\begin{equation}\label{eq:alpha-1}
  \alpha(0)=\alpha(1)=\alpha(2)=\alpha(3)=1,
  \qquad\text{and}\qquad \alpha(4)=2,\ \alpha(5)=3,\ \alpha(6)=3.
\end{equation}

Write $B_k(q)=\sum_{n\ge 0}b_k(n)q^n$, and adopt the convention
$\alpha(t)=0$ for $t<0$. Since $B_k=T_k\cdot\frac{1}{D}$, a term
$\varepsilon_E\, q^{E}$ of $T_k$ contributes exactly
$\varepsilon_E\,\alpha(n-E)$ to $b_k(n)$, so by the Cauchy product, we have
\[
  b_k(n)=\sum_{E}\varepsilon_E\,\alpha(n-E),
\]
the sum being over the exponents $E$ occurring in $T_k$, with
$\varepsilon_E\in\{1,-1\}$ the corresponding sign. In particular, only
those exponents with $E\le n$ contribute.

Now fix $t$ with $0\le t<2k+1$ and take $n=g_k+t$. By
\eqref{eq:Tk-head}, the smallest exponent of $T_k$ is $g_k$, and every
other exponent is at least $g_k+2k+1>g_k+t=n$. Hence the term
$+q^{g_k}$ is the only one contributing to $b_k(n)$ in this case, and
\begin{equation}\label{eq:bk-low}
  b_k(n):=b_k(g_k+t)=\alpha(t)\qquad(0\le t<2k+1).
\end{equation}
Combined with \eqref{eq:alpha-1}, this yields
\begin{equation}\label{eq:bk-values}
  b_k(g_k)=b_k(g_k+1)=b_k(g_k+2)=1\ \ (k\ge 1),
  \qquad
  b_k(g_k+3)=1\ \ (k\ge 2),
\end{equation}
the condition $k\ge 2$ in the last value coming from the requirement
$3<2k+1$. For $k=1$ the identity \eqref{eq:bk-low} is unavailable at
$t=3$; indeed $b_1(g_1+3)=\alpha(3)-\alpha(0)=0$. We handle the single remaining case below.

We write $H(q)=\sum\limits_{n\geq 0} h(n)q^n$ as before. Comparing coefficients in \eqref{eq:factorization} gives us
\begin{equation}\label{eq:conv-1}
  c_k(n)=\sum_{\ell\ge 0}h(\ell)\,b_k(n-\ell).
\end{equation}
Since every summand above is nonnegative, $c_k(n)=0$ when every summand vanishes.
Note that $c _k(n)>0$ follows from a single index $\ell$ with $h(\ell)>0$
and $b_k(n-\ell)>0$, for the latter we draw on the values $h(0)=h(3)=1$ and
$h(r)>0$ for $r\ge 7$ from Lemma~\ref{lem:H}, together with
\eqref{eq:bk-values}.

By \eqref{eq:Tk-head}, $B_k$ has no terms of degree below $g_k$. Thus for
$n<g_k$ every summand of \eqref{eq:conv-1} vanishes and $c_k(n)=0$, while at
$n=g_k$ only $\ell=0$ survives, giving us
\[
  c_k(g_k)=h(0)\,b_k(g_k)=1>0.
\]

Now we fix $r\ge 1$ and we will show $c_k(g_k+r)>0$. For $r\ge 7$, the part $h(r)>0$
pairs with the leading coefficient $b_k(g_k)=1$. For $1\le r\le 6$, the parts
$0$ and $3$ of $H$ combine with \eqref{eq:bk-values}, and we have
\[
\begin{array}{llll}
  r=1: & \ell=0, & h(0)\,b_k(g_k+1)>0 & (k\ge 1)\\[2pt]
  r=2: & \ell=0, & h(0)\,b_k(g_k+2)>0 & (k\ge 1)\\[2pt]
  r=3: & \ell=3, & h(3)\,b_k(g_k)>0 & (k\ge 1)\\[2pt]
  r=4: & \ell=3, & h(3)\,b_k(g_k+1)>0 & (k\ge 1)\\[2pt]
  r=5: & \ell=3, & h(3)\,b_k(g_k+2)>0 & (k\ge 1)\\[2pt]
  r=6: & \ell=3, & h(3)\,b_k(g_k+3)>0 & (k\ge 2)\\[2pt]
  r\ge 7: & \ell=r, & h(r)\,b_k(g_k)>0 & (k\ge 1)
\end{array}
\]
The remaining case is $r=6$ with $k=1$. Here $g_1=2$ and \eqref{eq:Tk-head} gives us
\[T_1(q)=q^{2}-q^{5}-q^{7}+(\text{terms of degree}\ge 12),\] so no term beyond
the displayed three influences the coefficient of $q^8$ in $B_1=T_1/D$, and
\eqref{eq:alpha-1} gives us
\[
  b_1(8)=\alpha(6)-\alpha(3)-\alpha(1)=3-1-1=1>0,
\]
hence, $c_1(2+6)\ge h(0)\,b_1(8)=1>0$.

Every $r\ge 1$ is thus accounted for, so together with the vanishing
below $g_k$ we now conclude
\[
  c_k(n)>0\quad\Longleftrightarrow\quad n\ge g_k=\frac{k(3k+1)}{2},
\]
completing the proof of Theorem~\ref{thm:main}. \qed

\section{Proof of Theorem \ref{thm:main3}}\label{sec:main3}

We shall follow the same general proof strategy as the proof of Theorem \ref{thm:main}.

Define
\begin{equation}\label{eq:cubic-product}
  \mathcal{P}_{\mathrm{t}}(q)
  :=\prod_{r\geq 1}
     \bigl(q^{2^r r};q^{2^{r+1}r}\bigr)_\infty.
\end{equation}
Merca \cite[Theorem~12]{Merca2025b} proved that
\begin{equation}\label{eq:merca-cubic-gf}
  \sum_{n\geq 0}\ddot{a}(n)q^n
  =\frac{\mathcal{P}_{\mathrm{t}}(q)}{(q;q)_\infty},
\end{equation}
where $\ddot{a}(n)$ counts the partitions of $n$ in which the
multiplicity of each part $m$ is at most
\(
  1+\nu_2(m^2)=2\nu_2(m)+1.
\)
To express \eqref{eq:merca-cubic-gf} in a form suitable for telescoping, we define
\begin{equation}\label{eq:f-cubic}
  f(m):=m\nu_2(4m^2)
       =m\bigl(2\nu_2(m)+2\bigr)=2m\bigl(\nu_2(m)+1\bigr).
\end{equation}

If $m=2^a u$, where $u$ is odd, then
\(
  f(m)=2^{a+1}(a+1)u.
\)
Consequently, we have
\begin{align}
  \prod_{m\geq 1}(1-q^{f(m)})
  &=
  \prod_{a\geq 0}
  \prod_{\substack{u\geq 1\\u\text{ odd}}}
  \left(1-q^{2^{a+1}(a+1)u}\right) \notag\\
  &=
  \prod_{r\geq 1}
  \prod_{\ell\geq 0}
  \left(1-q^{2^r r(2\ell+1)}\right) \notag\\
  &=
  \prod_{r\geq 1}
  \bigl(q^{2^r r};q^{2^{r+1}r}\bigr)_\infty
  =\mathcal{P}_{\mathrm{t}}(q).
  \label{eq:cubic-numerator}
\end{align}
It follows that
\begin{equation}\label{eq:cubic-quotient}
  \frac{\mathcal{P}_{\mathrm{t}}(q)}{(q;q)_\infty}
  =
  \prod_{m\geq 1}\frac{1-q^{f(m)}}{1-q^m}.
\end{equation}
Moreover,
\begin{equation}\label{eq:cubic-geometric}
  \frac{1-q^{f(m)}}{1-q^m}
  =
  1+q^m+q^{2m}+\cdots+q^{(2\nu_2(m)+1)m}.
\end{equation}

Fix $f^{(0)}(s)=s$ and define $f^{(r+1)}(s)=f(f^{(r)}(s))$. For $s\geq 1$, let
\[
  \mathcal{O}^{\mathrm{t}}_s
  :=\{f^{(r)}(s):r\geq 0\}
\]
be the forward orbit of $s$ under $f$, and set
\[
  \mathcal{R}_{\mathrm{t}}
  :=
  \mathcal{O}^{\mathrm{t}}_1
  \cup
  \mathcal{O}^{\mathrm{t}}_4
  \cup
  \mathcal{O}^{\mathrm{t}}_5.
\]
We now have the following result analogous to Lemma \ref{lem:H}.
\begin{lemma}\label{lem:H-cubic}
Let
\begin{equation}\label{eq:H-cubic-def}
  H_{\mathrm{t}}(q)
  :=
  (1-q)(1-q^4)(1-q^5)
  \frac{\mathcal{P}_{\mathrm{t}}(q)}{(q;q)_\infty}
  =
  \sum_{n\geq 0}h_{\mathrm{t}}(n)q^n.
\end{equation}
Then
\begin{equation}\label{eq:H-cubic-product}
  H_{\mathrm{t}}(q)
  =
  \prod_{m\notin\mathcal{R}_{\mathrm{t}}}
  \left(
    1+q^m+q^{2m}+\cdots+q^{(2\nu_2(m)+1)m}
  \right)
  \in\mathbb{N}[[q]].
\end{equation}

Moreover,
\begin{equation}\label{eq:H-cubic-support}
  h_{\mathrm{t}}(n)>0
  \quad\Longleftrightarrow\quad
  n\in\{0,3,6,7\}\ \text{or}\ n\geq 9.
\end{equation}
In particular,
\[
  h_{\mathrm{t}}(0)
  =h_{\mathrm{t}}(3)
  =h_{\mathrm{t}}(6)
  =h_{\mathrm{t}}(7)
  =1.
\]
\end{lemma}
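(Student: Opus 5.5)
The approach follows Lemma~\ref{lem:H} step for step, with $e$ replaced by $f(m)=2m(\nu_2(m)+1)$. The telescoping step carries over verbatim. For each $s$, the factor of \eqref{eq:cubic-quotient} indexed by $f^{(r)}(s)$ is $(1-q^{f^{(r+1)}(s)})/(1-q^{f^{(r)}(s)})$. Since $f(m)\ge 2m$, the iterates tend to infinity, so each orbit collapses to $1/(1-q^s)$. Once $\mathcal{O}^{\mathrm t}_1,\mathcal{O}^{\mathrm t}_4,\mathcal{O}^{\mathrm t}_5$ are known to be pairwise disjoint, regrouping gives $\mathcal{P}_{\mathrm t}/(q;q)_\infty=\frac{1}{(1-q)(1-q^4)(1-q^5)}\prod_{m\notin\mathcal{R}_{\mathrm t}}\frac{1-q^{f(m)}}{1-q^m}$. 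Then \eqref{eq:cubic-geometric} yields \eqref{eq:H-cubic-product}, and nonnegativity follows.

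The main obstacle is disjointness, because here the orbits are not confined to a single valuation class as they were for $e$. I would encode $m=2^a u$ with $u$ odd as the pair $(a,u)$. Writing $a+1=2^{b}w$ with $w$ odd, we get $f(m)=2^{a+1+b}\,wu$, so $f$ acts by
\[
(a,u)\mapsto\bigl(a+1+\nu_2(a+1),\,u\cdot w\bigr).
\]
The valuation sequence therefore depends only on the starting valuation, and the odd part is the starting odd part times a multiplier that is itself determined by the valuation sequence. The starting pairs are $(0,1)$ for $1$, $(0,5)$ for $5$, and $(2,1)$ for $4$.
\begin{itemize}
\item The orbits of $1$ and $5$ share the valuation sequence $0,1,3,6,7,11,\dots$. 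Their odd parts always differ by the factor $5$, so they never meet.
\item The orbit of $4$ has valuations $2,3,6,7,\dots$, which merge with the first sequence at valuation $3$. There the odd parts are $3$ (the element $24$) versus $1$ (the element $8$) and $5$ (the element $40$). Since the subsequent multipliers coincide, the odd parts remain in ratio $1:3:5$ forever.
\item Before the merge, the valuations $0,1$ do not occur in $\mathcal{O}^{\mathrm t}_4$, and $2$ does not occur in the other two orbits.
\end{itemize}
Hence the three orbits are disjoint. Explicitly, $\mathcal{O}^{\mathrm t}_1=\{1,2,8,64,\dots\}$, $\mathcal{O}^{\mathrm t}_4=\{4,24,192,\dots\}$ and $\mathcal{O}^{\mathrm t}_5=\{5,10,40,320,\dots\}$.

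For the support, $h_{\mathrm t}(n)$ counts partitions of $n$ into parts $m\notin\mathcal{R}_{\mathrm t}$, each used at most $2\nu_2(m)+1$ times; in particular odd parts are used at most once. The only odd elements of $\mathcal{R}_{\mathrm t}$ are $1$ and $5$, since every later iterate is even. Below $9$ the allowed parts are $3,6,7$, with $3$ and $7$ usable once and $6$ up to three times. This gives $h_{\mathrm t}(0)=h_{\mathrm t}(3)=h_{\mathrm t}(6)=h_{\mathrm t}(7)=1$, where $6$ can only be written as $6$ because $3+3$ is forbidden. It also gives $h_{\mathrm t}(n)=0$ for $n\in\{1,2,4,5,8\}$; for $n=8$ no combination of $3$ (once), $6$ and $7$ reaches $8$.

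For $n\ge 9$ there are two cases. If $n\notin\mathcal{R}_{\mathrm t}$, the one-part partition works. Otherwise $n$ is even and $n\ge 10$, so $n-3$ is odd, at least $7$, and not $1$ or $5$, hence allowed. Then $n=3+(n-3)$ is a partition into two distinct allowed parts. This establishes \eqref{eq:H-cubic-support}.
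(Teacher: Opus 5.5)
Your proof is correct and follows the paper's argument. Your $(a,u)$ encoding packages the paper's valuation recurrence $a_{r+1}=1+a_r+\nu_2(a_r+1)$ together with the odd-scaling rule $f(um)=uf(m)$, which give $f^{(r)}(5)=5x_r$ and $f^{(r)}(4)=3x_{r+1}$; the telescoping and support analysis are the same as the paper's. The one step you leave implicit, and the paper states, is that the valuation sequence is strictly increasing: that is what forces two elements of equal valuation to sit at aligned orbit positions before you compare odd parts.
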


\begin{proof}
The proof proceeds in four steps like the proof of Lemma \ref{lem:H}. We first show that the three orbits
$\mathcal{O}^{\mathrm{t}}_1,\mathcal{O}^{\mathrm{t}}_4,\mathcal{O}^{\mathrm{t}}_5$
are pairwise disjoint, so that regrouping the factors of
\eqref{eq:cubic-quotient} along these orbits is allowed. We then show that
the factors indexed by a single orbit $\mathcal{O}^{\mathrm{t}}_s$ telescope
down to $\tfrac1{1-q^s}$, then we assemble these pieces into the product
formula \eqref{eq:H-cubic-product}, and finally read off the support of
$H_{\mathrm{t}}$ from \eqref{eq:H-cubic-product} by interpreting
$h_{\mathrm{t}}(n)$ as a restricted partition counting function.

We will now show that the three orbits are pairwise disjoint. We first
describe each orbit explicitly. Write
\[
  x_r:=f^{(r)}(1),\qquad a_r:=\nu_2(x_r),
\]
so that the first few terms of the three orbits are
\[
  \mathcal{O}^{\mathrm{t}}_1=\{1,2,8,64,896,\dots\},\quad
  \mathcal{O}^{\mathrm{t}}_4=\{4,24,192,2688,\dots\},\quad
  \mathcal{O}^{\mathrm{t}}_5=\{5,10,40,320,4480,\dots\}.
\]
Since $f(m)=2m(\nu_2(m)+1)$ by \eqref{eq:f-cubic}, we have
$x_{r+1}=f(x_r)=2x_r(a_r+1)$. The integer $a_r+1$ is positive, so taking
$2$-adic valuations and using $\nu_2(mn)=\nu_2(m)+\nu_2(n)$ gives the
recurrence
\begin{equation}\label{eq:cubic-valuation-recurrence}
  a_{r+1}=1+a_r+\nu_2(a_r+1)>a_r,
\end{equation}
the inequality because $\nu_2(a_r+1)\ge 0$. Thus the sequence
$(a_r)_{r\ge 0}$ is strictly increasing. Its first three
values are $a_0=0$, $a_1=1$ and $a_2=3$, and therefore
\begin{equation}\label{eq:cubic-no-valuation-two}
  a_r\ne 2\qquad(r\ge 0).
\end{equation}

Next consider the effect of odd multipliers. If $u$ is odd then
$\nu_2(um)=\nu_2(m)$, so multiplying by $u$ does not change the valuation and
\begin{equation}\label{eq:cubic-odd-scaling}
  f(um)=u\,f(m).
\end{equation}
Thus $f$ commutes with multiplication by an odd number, and once an orbit
passes through an odd multiple of some $x_s$, all later iterates are obtained
by applying $f$ to that multiple. Since $f(5)=10=5x_1$, induction using
\eqref{eq:cubic-odd-scaling} lets us conclude
\begin{align}
  f^{(r)}(5)&=5x_r\qquad(r\ge 0),\label{eq:cubic-O5-scaling}\\
  f^{(r)}(4)&=3x_{r+1}\qquad(r\ge 1),\label{eq:cubic-O4-scaling}
\end{align}
the second of these because $f(4)=24=3x_2$.

We now verify disjointness. An equality $x_r=5x_s$ among the elements of
$\mathcal{O}^{\mathrm{t}}_1$ and $\mathcal{O}^{\mathrm{t}}_5$ would force
$a_r=a_s$, hence $r=s$ by the strict increase in
\eqref{eq:cubic-valuation-recurrence}, and then $x_r=5x_r$, which is
impossible. So
$\mathcal{O}^{\mathrm{t}}_1\cap\mathcal{O}^{\mathrm{t}}_5=\varnothing$. For
$\mathcal{O}^{\mathrm{t}}_4$ we treat the initial element separately: $4$ has
$2$-adic valuation exactly $2$, whereas by \eqref{eq:cubic-O5-scaling} every
element of $\mathcal{O}^{\mathrm{t}}_1\cup\mathcal{O}^{\mathrm{t}}_5$ has
valuation $a_r$, and $a_r\ne 2$ by \eqref{eq:cubic-no-valuation-two}. For the
remaining elements, \eqref{eq:cubic-O4-scaling} shows that an equality
$x_s=3x_{r+1}$ or $5x_s=3x_{r+1}$ would again force $s=r+1$, giving
$x_s=3x_s$ or $5x_s=3x_s$, both of which are impossible.

Thus, we have proved that
$\mathcal{O}^{\mathrm{t}}_1,\mathcal{O}^{\mathrm{t}}_4$ and
$\mathcal{O}^{\mathrm{t}}_5$ are pairwise disjoint.

The second part of the proof is exactly similar to the one in Lemma \ref{lem:H}, so we omit it for the sake of brevity. From the above disjointness of $\mathcal{O}^{\mathrm{t}}_1,\mathcal{O}^{\mathrm{t}}_4$ and
$\mathcal{O}^{\mathrm{t}}_5$, we can now conclude (like we did in the proof of Lemma \ref{lem:H}) that
\[
  \prod_{m\ge 1}\frac{1-q^{f(m)}}{1-q^m}
  =\Biggl(\prod_{s\in\{1,4,5\}}\frac{1}{1-q^{s}}\Biggr)
   \prod_{m\notin\mathcal{R}_{\mathrm{t}}}\frac{1-q^{f(m)}}{1-q^m}
  =\frac{1}{D(q)}
   \prod_{m\notin\mathcal{R}_{\mathrm{t}}}\frac{1-q^{f(m)}}{1-q^m}.
\]
Multiplying both sides by $D(q)$ cancels the prefactor, and rewriting each
remaining factor via \eqref{eq:cubic-geometric} in its geometric-series form
$1+q^m+\cdots+q^{(2\nu_2(m)+1)m}$ yields \eqref{eq:H-cubic-product}. Every
factor in \eqref{eq:H-cubic-product} is a polynomial with coefficients in
$\{0,1\}$, so the coefficients of $H_{\mathrm{t}}$ are nonnegative. This
proves the first part of the result.

We now want to prove the second part of the result. Expanding the product
\eqref{eq:H-cubic-product} shows that $h_{\mathrm{t}}(n)$ counts the
representations
\[
  n=\sum_{m\notin\mathcal{R}_{\mathrm{t}}}t_m\,m,
  \qquad 0\le t_m\le 2\nu_2(m)+1.
\]
In the language of partitions, $h_{\mathrm{t}}(n)$ is the number of
partitions of $n$ into \emph{allowed parts} $m\notin\mathcal{R}_{\mathrm{t}}$,
where the part $m$ may be repeated at most $2\nu_2(m)+1$ times. In
particular, $h_{\mathrm{t}}(n)>0$ if and only if at least one such partition
of $n$ exists. Note that every odd allowed part may be used at most once.

We first determine the small values. From the initial orbit values displayed
above we read off $\mathcal{R}_{\mathrm{t}}\cap[1,8]=\{1,2,4,5,8\}$, so the
allowed parts not exceeding $8$ are $3$, $6$ and $7$, the two odd ones being
usable at most once each. Thus, we have
\begin{itemize}
  \item $h_{\mathrm{t}}(0)=1$ (the empty partition), and
        $h_{\mathrm{t}}(3)=h_{\mathrm{t}}(6)=h_{\mathrm{t}}(7)=1$ (the single
        parts $3$, $6$ and $7$),
  \item $h_{\mathrm{t}}(1)=h_{\mathrm{t}}(2)=0$, since the smallest allowed
        part is $3$,
  \item $h_{\mathrm{t}}(4)=h_{\mathrm{t}}(5)=0$, since the only partitions
        available use the part $3$ at most once, and
  \item $h_{\mathrm{t}}(8)=0$, since $8\in\mathcal{R}_{\mathrm{t}}$ is not
        itself allowed, and none of $3+5$, $6+2$ and $7+1$ has both parts
        allowed.
\end{itemize}

We now want to show that $h_{\mathrm{t}}(n)>0$ for all $n\ge 9$. If
$n\notin\mathcal{R}_{\mathrm{t}}$, then $n$ is itself an allowed part, and
the one-part partition $n=n$ shows $h_{\mathrm{t}}(n)\ge 1$ for all such $n$.
Now let $n\in\mathcal{R}_{\mathrm{t}}$ with $n\ge 9$. Since $f(m)$ is always
even, the only odd elements of $\mathcal{R}_{\mathrm{t}}$ are the seeds $1$
and $5$, so $n$ is even. Then $n-3$ is odd with $n-3\ge 7$, hence
$n-3\notin\{1,5\}$, which forces $n-3\notin\mathcal{R}_{\mathrm{t}}$. Thus
$n=3+(n-3)$ is a partition of $n$ into two distinct allowed parts, and we
have $h_{\mathrm{t}}(n)\ge 1$ for every $n\ge 9$. This completes our proof.
\end{proof}

We are now ready to prove Theorem \ref{thm:main3}. We do this in two parts:
first, we show that $\tilde c_k(n)\geq 0$ for all $n\geq 0$, and then we look
at when we have strict inequality $\tilde c_k(n)>0$.

Subtracting the partial sum over $r\in[1-k,k]$ from \eqref{eq:epnt} gives
\[
  \poch-\sum_{r=1-k}^{k}(-1)^r q^{r(3r-1)/2}
  =\sum_{r\notin[1-k,k]}(-1)^r q^{r(3r-1)/2},
\]
so that
\[
  1-\frac{1}{\poch}\sum_{r=1-k}^{k}(-1)^r q^{r(3r-1)/2}
  =\frac{1}{\poch}\sum_{r\notin[1-k,k]}(-1)^r q^{r(3r-1)/2}.
\]
Multiplying by $(-1)^k\mathcal{P}_{\mathrm{t}}(q)$ and absorbing the sign into
the summand yields
\[
  \tilde C_k(q)
  =\frac{\mathcal{P}_{\mathrm{t}}(q)}{\poch}
   \sum_{r\notin[1-k,k]}(-1)^{r+k}q^{r(3r-1)/2}.
\]
Now substitute $j=-r$. One has $r(3r-1)/2=j(3j+1)/2$ and
$(-1)^{r+k}=(-1)^{j+k}$, while the condition $r\notin[1-k,k]$ becomes
$j\notin[-k,k-1]$. Hence the sum is exactly the tail
$T_k(q)=\sum_{j\notin[-k,k-1]}(-1)^{j+k}q^{j(3j+1)/2}$ of
Section~\ref{sec:reduction}, and therefore
\begin{equation}\label{eq:C-cubic-tail}
  \tilde C_k(q)=\frac{\mathcal{P}_{\mathrm{t}}(q)}{\poch}\,T_k(q).
\end{equation}

We rewrite \eqref{eq:C-cubic-tail} cleverly, and recalling the definitions of
$H_{\mathrm{t}}$ in \eqref{eq:H-cubic-def} and of $B_k$ gives us our desired
factorization
\begin{equation}\label{eq:C-cubic-factorization}
  \tilde C_k(q)
  =\underbrace{D(q)\,\frac{\mathcal{P}_{\mathrm{t}}(q)}{\poch}}_{H_{\mathrm{t}}(q)}
   \cdot\underbrace{\frac{T_k(q)}{D(q)}}_{B_k(q)}
  =H_{\mathrm{t}}(q)\,B_k(q).
\end{equation}
By Lemma~\ref{lem:liu} and Lemma~\ref{lem:H-cubic} both factors lie in
$\NN[[q]]$, so \eqref{eq:C-cubic-factorization} immediately gives
$\tilde C_k(q)\in\NN[[q]]$. This proves the nonnegativity assertion of
Theorem~\ref{thm:main3}.

We now show that $\tilde c_k(n)>0$ if and only if $n\ge g_k$.

The tail $T_k$ and the quotient $B_k=T_k/D$ are the same as in
Section~\ref{sec:reduction}, so we reuse the information already obtained
there. By \eqref{eq:Tk-head},
\[
  T_k(q)=q^{g_k}+(\text{terms of degree}\ge g_k+2k+1),
\]
and, writing $B_k(q)=\sum_{n\ge 0}b_k(n)q^n$ as before, \eqref{eq:bk-low}
together with $\alpha(0)=\alpha(1)=\alpha(2)=1$ gives
\begin{equation}\label{eq:cubic-b-low}
  b_k(g_k)=b_k(g_k+1)=b_k(g_k+2)=1\qquad(k\ge 1),
\end{equation}
the range being available for every $k\ge 1$ since $2<2k+1$. Moreover $B_k$
has no terms of degree below $g_k$.

Comparing coefficients in \eqref{eq:C-cubic-factorization} gives us
\begin{equation}\label{eq:cubic-convolution}
  \tilde c_k(n)=\sum_{\ell\ge 0}h_{\mathrm{t}}(\ell)\,b_k(n-\ell).
\end{equation}
Since every summand above is nonnegative, $\tilde c_k(n)=0$ when every
summand vanishes. While $\tilde c_k(n)>0$ follows from a single index $\ell$
with $h_{\mathrm{t}}(\ell)>0$ and $b_k(n-\ell)>0$, for the latter we draw on
the values $h_{\mathrm{t}}(0)=h_{\mathrm{t}}(3)=h_{\mathrm{t}}(6)
=h_{\mathrm{t}}(7)=1$ and $h_{\mathrm{t}}(r)>0$ for $r\ge 9$ from
Lemma~\ref{lem:H-cubic}, together with \eqref{eq:cubic-b-low}.

Thus for $n<g_k$ every summand of \eqref{eq:cubic-convolution} vanishes and
$\tilde c_k(n)=0$, while at $n=g_k$ only $\ell=0$ survives, giving us
\[
  \tilde c_k(g_k)=h_{\mathrm{t}}(0)\,b_k(g_k)=1>0.
\]

Now we fix $r\ge 1$ and we will show $\tilde c_k(g_k+r)>0$. For $r\ge 9$, the
part $h_{\mathrm{t}}(r)>0$ pairs with the leading coefficient $b_k(g_k)=1$.
For $1\le r\le 8$, the parts $0$, $3$, $6$ and $7$ of $H_{\mathrm{t}}$
combine with \eqref{eq:cubic-b-low}, and we have
\[
\begin{array}{llll}
  r=1: & \ell=0, & h_{\mathrm{t}}(0)\,b_k(g_k+1)>0 & (k\ge 1)\\[2pt]
  r=2: & \ell=0, & h_{\mathrm{t}}(0)\,b_k(g_k+2)>0 & (k\ge 1)\\[2pt]
  r=3: & \ell=3, & h_{\mathrm{t}}(3)\,b_k(g_k)>0 & (k\ge 1)\\[2pt]
  r=4: & \ell=3, & h_{\mathrm{t}}(3)\,b_k(g_k+1)>0 & (k\ge 1)\\[2pt]
  r=5: & \ell=3, & h_{\mathrm{t}}(3)\,b_k(g_k+2)>0 & (k\ge 1)\\[2pt]
  r=6: & \ell=6, & h_{\mathrm{t}}(6)\,b_k(g_k)>0 & (k\ge 1)\\[2pt]
  r=7: & \ell=7, & h_{\mathrm{t}}(7)\,b_k(g_k)>0 & (k\ge 1)\\[2pt]
  r=8: & \ell=6, & h_{\mathrm{t}}(6)\,b_k(g_k+2)>0 & (k\ge 1)\\[2pt]
  r\ge 9: & \ell=r, & h_{\mathrm{t}}(r)\,b_k(g_k)>0 & (k\ge 1)
\end{array}
\]

Every $r\ge 1$ is thus accounted for, so together with the vanishing below
$g_k$ we now conclude
\[
  \tilde c_k(n)>0\quad\Longleftrightarrow\quad n\ge g_k=\frac{k(3k+1)}{2},
\]
completing the proof of Theorem~\ref{thm:main3}. \qed

\section{Proofs of Theorems \ref{thm:main2} and \ref{thm:main4}}\label{sec:new}

We shall use Theorems \ref{thm:main} and \ref{thm:main3} to prove Theorem \ref{thm:main2} and \ref{thm:main4} respectively.

\begin{proof}[Proof of Theorem \ref{thm:main2}]
    Notice that
    \[
    \overline{C}_k(q)-C_{k+1}(q)=(-1)^{k-1}\dfrac{(-1)^{k+1}q^{(k+1)(3k+2)/2}}{(q;q)_\infty}\prod_{m\geq 1}(1-q^{2N_m}).
    \]
    This immediately implies
    \[
    \overline{C}_k(q)=C_{k+1}(q)+q^{g_k+2k+1}\sum_{n\geq 0}p_{\nu}(n)q^n.
    \]
    From Theorem \ref{thm:main} and the above equation (with the fact that $p_{\nu}(n)> 0$ for all $n\geq 0$), we immediately get $\overline{C}_k(q)\in \NN[[q]]$.

    For the exact positivity bound, we notice that the coefficients of the second summand is strictly positive for every $n\geq g_k+2k+1$. Also, $p_{\nu}(0)= 1$, which ensures that $q^{g_k+2k+1}$ is present in the series. Again, $C_{k+1}(q)$ starts at $g_{k+1}=g_k+3k+2$, which is greater than $g_k+2k+1$, giving us the positivity of the other part when $n\geq g_k+2k+1$. This proves the result.
\end{proof}

\begin{proof}[Proof of Theorem \ref{thm:main4}]
    Notice that
    \[
    \hat C_k(q)-\tilde C_{k+1}(q)=(-1)^{k-1}\dfrac{(-1)^{k+1}q^{(k+1)(3k+2)/2}}{(q;q)_\infty}\prod _{n=1}^\infty\bigl(q^{2^nn};q^{2^{n+1}n}\bigr)_\infty.
    \]
    This immediately implies
    \[
    \hat{C}_k(q)=\tilde C_{k+1}(q)+q^{g_k+2k+1}\sum_{n\geq 0}\ddot{a}(n)q^n.
    \]
    From Theorem \ref{thm:main3} and the above equation (with the fact that $\ddot{a}(n)>0$ for all $n\geq 0$), we immediately get $\hat {C}_k(q)\in \NN[[q]]$.

    For the exact positivity bound, we notice that the coefficients of the second summand is strictly positive for every $n\geq g_k+2k+1$. Also, $\ddot{a}(0)= 1$, which ensures that $q^{g_k+2k+1}$ is present in the series. Again, $\tilde C_{k+1}(q)$ starts at $g_{k+1}$ which is greater than $g_k+2k+1$, giving us the positivity of the other part when $n\geq g_k+2k+1$. This proves the result.
\end{proof}

\section*{Acknowledgments}

The authors used OpenAI's ChatGPT 5.5 and 5.6 Sol to clean up notations and editing of the manuscript. ChatGPT 5.6 Sol also caught the error in the original statement of Merca's Conjecture 15 \cite{Merca2025b} and gave the idea for the proof of Theorem \ref{thm:main2} presented here. The authors then verified the proof. The authors take full responsibility for all of the mathematical content of the paper. The first author thanks Ahmedabad University for funding the ChatGPT Plus license.

\end{document}